\newtheorem{theorem}{Theorem}
\newtheorem{lemma}[theorem]{Lemma}
\newtheorem{question}{Question}
\newtheorem{conjecture}{Conjecture}
\author
{
Raphael Steiner 
}
\thanks{Departement of Computer Science, Institute of Theoretical Computer Science, ETH Z\"{u}rich, Switzerland,  \texttt{raphaelmario.steiner@inf.ethz.ch}.
This work was supported by an ETH Postdoctoral Fellowship.}
\date{\today}
\title{Disproof of a Conjecture by Woodall}
\begin{document}
\maketitle

\begin{abstract}
In 2001, in a survey article~\cite{woodall} about list coloring, Woodall conjectured that for every pair of integers $s,t \ge 1$, all graphs without a $K_{s,t}$-minor are $(s+t-1)$-choosable.
In this note we refute this conjecture in a strong form: We prove that for every choice of constants $\varepsilon>0$ and $C \ge 1$ there exists $N=N(\varepsilon,C) \in \mathbb{N}$ such that for all integers $s,t $ with $N \le s \le t \le Cs$ there exists a graph without a $K_{s,t}$-minor and list chromatic number greater than $(1-\varepsilon)(2s+t)$. 
\end{abstract}

\section{Introduction}
\paragraph{\textbf{Preliminaries.}} All graphs considered in this paper are loopless and have no parallel edges. Given numbers $s,t \in \mathbb{N}$ we denote by $K_t$ the complete graph of order $t$ and by $K_{s,t}$ the complete bipartite graph with bipartition classes of size $s$ and $t$, respectively. Given graphs $G$ and $F$, we say that $G$ \emph{contains $F$ as a minor}, in symbols, $G \succeq F$, if there exists a collection of pairwise disjoint non-empty subsets $(Z_f)_{f \in V(F)}$ of the vertex-set of $G$ such that for every $f \in V(F)$, the induced subgraph $G[Z_f]$ of $G$ is connected, and furthermore, for every edge $f_1f_2 \in E(F)$, there exists at least one edge in $G$ with endpoints in $Z_{f_1}$ and $Z_{f_2}$. It is easily seen that our definition above is equivalent to the standard definition of graph minors, i.e., $G \succeq F$ if and only if $G$ can be transformed into a graph isomorphic to $F$ by performing a sequence of vertex or edge deletions, and edge contractions.

A \emph{proper coloring} of a graph $G$ with color-set $S$ is a mapping $c:V(G) \rightarrow S$ such that $c^{-1}(s)$ is an independent set, for every $s \in S$. A \emph{list assignment} for $G$ is an assignment $L:V(G) \rightarrow 2^\mathbb{N}$ of finite sets $L(v)$ (called lists) to the vertices $v \in V(G)$. An \emph{$L$-coloring} of $G$ is a proper coloring $c:V(G) \rightarrow \mathbb{N}$ of $G$ in which every vertex must be assigned a color from its respective list, i.e., $c(v) \in L(v)$ for every $v \in V(G)$. 
The chromatic number $\chi(G)$ of a graph $G$ is defined as the smallest integer $k \ge 1$ such that $G$ admits a proper coloring with color-set $[k]$. Similarly, the \emph{list chromatic number} $\chi_\ell(G)$ of a graph $G$ is defined as the smallest number $k \ge 1$ such that $G$ admits an $L$-coloring for \emph{every} assignment $L(\cdot)$ of color lists to the vertices of $G$, provided that $|L(v)| \ge k$ for every $v \in V(G)$ (we refer to this property by saying that $G$ is \emph{$k$-choosable}).

Clearly, $\chi(G) \le \chi_\ell(G)$ for every graph $G$, but in general $\chi_\ell(G)$ is not bounded from above by a function in $\chi(G)$, as shown by complete bipartite graphs. 

\bigskip

Hadwiger's conjecture, a vast generalization of the four-color-theorem~\cite{appelhaken1,appelhaken2} and arguably one of the most important open problems in graph theory, states the following upper bound on the chromatic number of graphs containing no $K_t$-minor:
\begin{conjecture}[Hadwiger~\cite{hadwiger}, 1943]\label{hadwiger}
For every $t \in \mathbb{N}$, if $G$ is a graph such that $G \not\succeq K_t$, then $\chi(G) \le t-1$.
\end{conjecture}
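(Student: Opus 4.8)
Conjecture~\ref{hadwiger} is Hadwiger's conjecture and remains open; what follows is therefore a description of the lines of attack one would pursue, and of the point at which they currently stall, rather than a complete argument. The plan would be to induct on $t$. The base cases are classical: $t \le 3$ is immediate ($K_3$-minor-free graphs are forests), $t = 4$ holds because $K_4$-minor-free graphs have treewidth at most $2$ and hence are $2$-degenerate, $t = 5$ is equivalent to the Four Color Theorem via Wagner's decomposition of $K_5$-minor-free graphs into clique-sums of planar graphs and the Wagner graph, and $t = 6$ is the Robertson--Seymour--Thomas theorem, again reduced to the Four Color Theorem. For the inductive step one passes to a minimal counterexample $G$ with $G \not\succeq K_t$ and $\chi(G) \ge t$ and normalises its structure by standard pasting arguments: $G$ may be taken vertex-critical, hence of minimum degree at least $t-1$, and to have no clique cutset, because a clique separator of order at most $t-1$ lets one merge $(t-1)$-colourings of the two sides after permuting colours so that they agree on the separator.

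With $G$ so normalised there are two routes one would try to combine. The density route uses the extremal bound that a graph with no $K_t$-minor has at most $O(t\sqrt{\log t})$ edges per vertex (Kostochka; Thomason), which together with the lower bound $(t-1)/2$ edges per vertex from criticality shows that a counterexample would have to be unusually dense --- but with the constant $1$ this inequality is not violated, so density alone cannot finish. The structural route, available for $t$ large, uses the graph-minors structure theorem: a graph with no $K_t$-minor is obtained by clique-sums over cliques of order less than $t$ from graphs that embed in a bounded-genus surface up to a bounded number of vortices of bounded width and the deletion of a bounded number of apex vertices. One would colour each such piece with $t-1$ colours --- the near-embeddable part needs comparatively few colours and each apex costs one extra --- and then glue the colourings along the clique-sum bonds, which is always possible since a clique of order less than $t$ leaves a colour free for realignment.

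The main obstacle is exactly the arithmetic of that gluing. The parameters permitted by the structure theorem, in particular the number of apices, grow with $t$, so the naive count for a single almost-embeddable piece is of order $t$ but with a constant strictly larger than $1$; no version of the structure theorem is known that is quantitatively tight enough to bring this down to $t-1$, and it may well be that no such version exists. Consequently Hadwiger's conjecture is open for every $t \ge 7$, and the best unconditional bounds on the chromatic number of $K_t$-minor-free graphs remain $O(t\sqrt{\log t})$ (Kostochka; Thomason), improved recently to $O(t\log\log t)$ (Norin--Postle--Song; Postle; Delcourt--Postle) --- still a factor of order $\log\log t$ short of the conjectured linear bound, and removing that factor is precisely what any complete proof would need to do.
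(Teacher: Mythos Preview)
Your assessment is correct: the statement is Hadwiger's conjecture, which the paper presents as an open problem (it is labeled \texttt{Conjecture}, not \texttt{Theorem}) and does not attempt to prove. The paper invokes it only as motivation and background for Woodall's Conjectures~\ref{con:chrom} and~\ref{con:lists}, so there is no ``paper's own proof'' to compare against, and your explanation that no complete argument is currently known is the appropriate response. Your summary of the small-$t$ cases, the density and structural routes, and the quantitative obstruction in the structure-theorem approach is accurate and matches the state of the art the paper alludes to in its references~\cite{norine,norine2,postle,postle2,postle3,del,survey}.
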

Hadwiger's conjecture has given rise to many beautiful results and open problems in the past. For a good overview of this field of research, encompassing the major results until about $2$ years ago, we refer the reader to the survey article~\cite{survey} by Seymour. Very recently, there has been considerable progress on the asymptotic version of Hadwiger's conjecture, see~\cite{norine, norine2, postle, postle2, postle3, del} for further reference. 

The remarkable difficulty of Hadwiger's conjecture has led to the study of several relaxations. One natural such relaxation is to prove the conjecture for graphs which (more strongly) exclude not only $K_t$, but a fixed sparser graph $H$ on $t$ vertices as a minor. In particular the case when $H$ is a complete bipartite graph has received attention. In his survey article~\cite{woodall} on list coloring, Woodall made the following two conjectures, both of which have remained open problems thus far. The first conjecture is a weakening of Hadwiger's conjecture, that was independently proposed by Seymour (cf.~\cite{kostochkabip3}). 
\begin{conjecture}[cf.~\cite{woodall}, Conjecture ``$C(r,s,\chi)$'']\label{con:chrom}
For every $s, t \in \mathbb{N}$, if $G$ is a graph with $G \not\succeq K_{s,t}$, then $\chi(G) \le s+t-1$.
\end{conjecture}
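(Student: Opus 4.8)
\bigskip

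\noindent\textbf{Proof proposal for Conjecture~\ref{con:chrom}.}
The plan is to attack the statement by the standard route for Hadwiger-type bounds: pass to a minimal counterexample, boost its connectivity, and then either exhibit the forbidden minor directly or reach a contradiction. So suppose the conjecture fails and pick a counterexample $G$ minimizing $|V(G)|$, and, subject to that, $|E(G)|$. Then $G$ is vertex-$(s+t)$-critical; in particular $\delta(G)\ge s+t-1$, $G$ has no clique cutset, and no proper subgraph of $G$ has chromatic number $s+t$. First I would upgrade this to genuine high connectivity by a Mader-type argument: if $G$ admits a separation $(A,B)$ of order much smaller than $s+t$, then contracting the smaller side onto the separator and recombining $(s+t-1)$-colourings of the two parts contradicts criticality; iterating, one should be able to assume $G$ is $k$-connected for some $k$ growing linearly with $s+t$.

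Next I would run a density dichotomy on $G$ (or on a highly connected subgraph). In the sparse case the known extremal estimates for $K_{s,t}$-minors — the bounds of Kostochka and Prince and their refinements, which once $t$ is not too small relative to $s$ bound the number of edges of a $K_{s,t}$-minor-free $n$-vertex graph by roughly $\tfrac12\bigl(t+\Theta(s)\bigr)n$ — would force a vertex of degree below $s+t-1$, contradicting $\delta(G)\ge s+t-1$, except when $s$ lies in a bounded range; those few small values of $s$ (notably $s\in\{1,2\}$, where the statement is known) I would dispatch by ad hoc arguments. In the remaining dense, highly connected regime I would try to build a $K_{s,t}$-minor by hand: fix a set $\{a_1,\dots,a_s\}$ of $s$ vertices to be the singleton branch sets of the small side, and use the connectivity of $G$ together with a disjoint-paths/linkage theorem to carve out $t$ pairwise vertex-disjoint connected subgraphs $B_1,\dots,B_t$ of $G-\{a_1,\dots,a_s\}$, each meeting $N(a_i)$ for every $i$; contracting then gives $G\succeq K_{s,t}$, the desired contradiction. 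The point of the reduction to a critical, highly connected graph is precisely that $\delta(G)\ge s+t-1$ together with connectivity should provide enough room to route all $t$ subgraphs simultaneously.

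The hard part — and the reason this conjecture is still open — is the quantitative gap between the two cases. The degeneracy of $K_{s,t}$-minor-free graphs can exceed $s+t$ by a constant factor (up to about $\tfrac32$ when $s$ and $t$ are comparable; indeed this is exactly what the construction in the present paper demonstrates, via $\chi_\ell(G)\le\mathrm{degeneracy}(G)+1$), so no purely degeneracy- or density-based argument can possibly yield the bound $\chi\le s+t-1$: one must genuinely exploit that we are colouring honestly rather than from lists, and correspondingly the list-colouring analogue of the conjecture is false. Concretely, the obstacle is to show that a sufficiently connected, vertex-$(s+t)$-critical graph contains a $K_{s,t}$-minor even though its degeneracy may be as large as $t+\Theta(s)$; bridging that gap is where I expect a genuinely new idea to be needed.
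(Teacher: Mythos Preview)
The paper does not contain a proof of Conjecture~\ref{con:chrom}. It is stated there as an open problem (a weakening of Hadwiger's conjecture, attributed to Woodall and independently to Seymour), and the paper's contribution is to \emph{disprove} the list-colouring analogue, Conjecture~\ref{con:lists}. So there is no ``paper's own proof'' to compare against, and the right question is simply whether your proposal actually proves the statement. It does not, and to your credit you say so yourself in the final paragraph.

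The genuine gap is that your dichotomy does not close. In the ``sparse'' branch you appeal to Kostochka--Prince-type edge bounds to force $\delta(G)<s+t-1$; but those bounds give degeneracy of order roughly $\tfrac{1}{2}(t+3s)$ in the regime $s\asymp t$, which exceeds $s+t-1$ by a constant factor. Thus the sparse branch fails not just for ``a few small values of $s$'' but for the entire range where $s$ and $t$ are comparable --- exactly the regime where the paper's construction lives. In the ``dense, highly connected'' branch, the sentence ``use a disjoint-paths/linkage theorem to carve out $t$ pairwise vertex-disjoint connected subgraphs $B_1,\dots,B_t$, each meeting $N(a_i)$ for every $i$'' is not an argument: no known linkage theorem delivers this from $\delta(G)\ge s+t-1$ and linear connectivity alone, and if one did it would essentially resolve Hadwiger's conjecture. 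Your closing paragraph correctly diagnoses both issues, including the observation (which is precisely Theorem~\ref{main} of the paper) that the degeneracy/list-chromatic route is provably too weak; but diagnosing the obstruction is not the same as overcoming it, and what you have submitted is a research plan rather than a proof.
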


\begin{conjecture}[cf.~\cite{woodall}, Conjecture ``$C(r,s,\text{ch})$'']\label{con:lists}
For every $s, t \in \mathbb{N}$, if $G$ is a graph with $G \not\succeq K_{s,t}$, then $\chi_\ell(G) \le s+t-1$.
\end{conjecture}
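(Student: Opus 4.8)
\medskip
\noindent\textbf{Proof proposal.}
The natural strategy is to bound the \emph{degeneracy} of $K_{s,t}$-minor-free graphs, since a $d$-degenerate graph is $(d+1)$-choosable: if one shows that every graph $G$ with $G\not\succeq K_{s,t}$ admits a vertex ordering $v_1,\dots,v_n$ in which each $v_i$ has at most $s+t-2$ neighbours among $v_1,\dots,v_{i-1}$, then Conjecture~\ref{con:lists} follows by list-colouring $v_n,v_{n-1},\dots,v_1$ greedily. Since every subgraph of a $K_{s,t}$-minor-free graph is again $K_{s,t}$-minor-free, this reduces the whole conjecture to the local statement: \emph{every nonempty graph $G$ with $G\not\succeq K_{s,t}$ contains a vertex of degree at most $s+t-2$} (repeatedly delete such a vertex to obtain the ordering).

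I would first settle the small cases, both to build intuition and to serve as base cases for an inductive refinement. For $s=1$: a connected graph on at least $t+1$ vertices contains $K_{1,t}$ as a minor (contract a spanning tree), so $G\not\succeq K_{1,t}$ forces every component to have at most $t$ vertices and hence $\chi_\ell(G)\le t=s+t-1$. For $s=2$: one uses the known fact that $K_{2,t}$-minor-free graphs are sparse (of the order of $\tfrac12(t+1)|V(G)|$ edges) together with a block-tree / ear-decomposition analysis to run the argument; already here pure degeneracy is too blunt and one must examine the extremal configurations directly.

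For general $s$ the plan is to extract a quantitatively sharp density bound from $G\not\succeq K_{s,t}$ --- ideally $|E(H)|\le\tfrac12(s+t-2)|V(H)|$ for every subgraph $H\subseteq G$ --- and read off the degeneracy. To force a low-degree vertex I would take $v$ of minimum degree $\delta$, pick a minimal set $W$ dominating $N(v)$, partition $N(v)$ into $s-1$ connected pieces, and build $t$ further branch sets from a breadth-first structure around $N(v)\cup W$, so that a $K_{s,t}$-minor appears unless $\delta\le s+t-2$ or $G$ is small; a discharging argument on a minimum counterexample, shifting charge from high-degree vertices to the (necessarily abundant) low-degree ones, is an equivalent route. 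Should the bare degeneracy estimate come out slightly too weak, one falls back on the standard refinement used for excluded-minor choosability results: locate a separator $S$ with $|S|<s+t-1$, precolour $S$, and list-colour the (strictly smaller, still $K_{s,t}$-minor-free) pieces by induction.

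The hard part is the density/degeneracy estimate for general $s$: the known extremal results for complete bipartite minors (Myers--Thomason, and the bounds of Kostochka--Prince) only guarantee a vertex of degree of order $t+2s$, not $t+s$, in a $K_{s,t}$-minor-free graph, so the naive argument loses a term that is linear in $s$. The entire difficulty is to show that the ``extra'' $s-1$ branch sets needed beyond a $K_t$-minor can always be secured once the minimum degree reaches $s+t-1$, rather than $2s+t-1$; this forces one to exploit the bipartite structure of $K_{s,t}$ far more efficiently than in any known argument, and it is precisely here that one should stay alert to the possibility that no such improvement exists.
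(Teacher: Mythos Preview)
Your proposal cannot succeed because the statement is \emph{false}: this is precisely what the paper establishes. Conjecture~\ref{con:lists} is not proved in the paper but \emph{disproved} by Theorem~\ref{main}, which constructs, for all sufficiently large $s\le t\le Cs$, graphs with no $K_{s,t}$-minor whose list chromatic number exceeds $(1-\varepsilon)(2s+t)$. Since $(1-\varepsilon)(2s+t)>s+t-1$ for small $\varepsilon$ and large $s$, no proof of the conjecture can exist.

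Your own closing caveat was on target. The degeneracy route genuinely tops out near $2s+t$ rather than $s+t$, and this is not an artefact of weak bounds: the paper builds its counterexamples by taking the complement $H$ of a sparse random bipartite graph on parts of sizes $m\approx(1-\varepsilon)(s+t)$ and $n=s-1$, so that $H$ is nearly complete (hence has minimum degree close to $m+n\approx 2s+t$) yet provably has no $K_{s,t}$-minor. Gluing many copies of $H$ along the clique $B$ of size $s-1$ and assigning tailored lists then forces the list chromatic number above $(1-\varepsilon)(2s+t)$. In other words, the ``extra $s-1$ branch sets'' you hoped to secure once the minimum degree reaches $s+t-1$ really cannot always be secured: there are $K_{s,t}$-minor-free graphs with minimum degree of order $2s+t$. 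Any attempt to prove Conjecture~\ref{con:lists} in general must therefore fail, and the correct task for this statement is to exhibit counterexamples, not to seek a proof.
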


For both conjectures, several partial results have been obtained in the past, which we summarize in the following.

Woodall proved in~\cite{woodall,woodall2} that Conjecture~\ref{con:lists} holds if $s \le 2$. The correctness for $s=2$ can also be obtained as a consequence of the main result of Chudnovsky et al.~\cite{chudnovsky}: They proved, extending a result by Myers~\cite{myers}, that every $n$-vertex-graph with no $K_{2,t}$-minor has at most $\frac{t+1}{2}(n-1)$ edges. Hence the average degree of such graphs is less than $t+1$. The latter implies that every graph with no $K_{2,t}$-minor is $t$-degenerate and therefrom (via greedy coloring) $(t+1)$-choosable.

For $s=t=3$, Conjecture~\ref{con:lists} was confirmed by Woodall in~\cite{woodall}. The conjecture is also valid for $s=3$ and $t=4$, since J{\o}rgensen proved (cf. Corollary~7 in~\cite{jorgensen}) that every $K_{3,4}$-minor free graph is $5$-degenerate, and hence $6$-choosable. For $s=3$ and large values of $t$, it was proved in~\cite{kostochkabip3} by Kostochka and Prince that for $t \ge 6300$ every $n$-vertex graph without a $K_{3,t}$-minor has at most $\frac{t+3}{2}(n-2)+1$ edges. Therefore (by considering the average degree), every such graph is $(t+2)$-degenerate. Using greedy coloring one can conclude that every graph without a $K_{3,t}$-minor is $(3+t)$-choosable for $t \ge 6300$, which misses Woodall's conjecture by an additive constant of $1$ only.  With an additional argument\footnote{which does not seem to extend to list coloring}, Kostochka and Prince proved in~\cite{kostochkabip3} that Conjecture~\ref{con:chrom} holds for $s=3$ and $t \ge 6300$. 

Finally, the case $s=4 \le t$ was considered by Kawarabayashi~\cite{kawarabayashi}, who proved that graphs without a $K_{4,t}$-minor are $4t$-choosable, for every $t \ge 1$. For $s=t=4$, a better result is known: J{\o}rgensen proved (cf. Corollary~6 in~\cite{jorgensen}) that every graph without a $K_{4,4}$-minor is $7$-degenerate, and hence $8$-choosable.  

Let us now turn to asymptotic bounds for large $s$ and $t$. By recent results of Delcourt and Postle~\cite{del}, every graph with no $K_t$-minor is $O(t \log \log t)$-colorable and $O(t (\log \log t)^2)$-choosable. This in particular means that the maximum (list) chromatic number of graphs with no $K_{s,t}$-minor is bounded by $O((s+t)\log \log (s+t))$ (respectively $O((s+t)(\log \log (s+t))^2)$). These are the asymptotically best known bounds for Conjecture~\ref{con:chrom} and~\ref{con:lists} when $s$ and $t$ are of comparable size, however, if $t$ is significantly bigger than $s$, then better bounds are known. Notably, Kostochka~\cite{kostochkabip1,kostochkabip2} proved that for every $s \ge 1$, Conjecture~\ref{con:chrom} holds if $t \ge t_0(s)$, where $t_0(s)=O(s^3\log^3(s))$. However, an analogous result is not known for the list chromatic number. The only similar result in this direction was proved in~\cite{kostochkadens1,kostochkadens2}, see also~\cite{kuehn} for related results:  If $t$ is huge compared to $s$, concretely if if $t>(240s\log_2(s))^{8s\log_2(s)+1}$, then every graph with no $K_{s,t}$-minor is $(3s+t)$-degenerate, and therefore $(3s+t+1)$-choosable. 

\medskip 

In this note, we disprove Conjecture~\ref{con:lists}, by constructing counterexamples for sufficiently large values of $s$ and $t$ which have a comparable size. 
\begin{theorem}\label{main}
For every choice of constants $\varepsilon>0$ and $C \ge 1$ there exists $N=N(\varepsilon,C) \in \mathbb{N}$ such that for all integers $s,t $ with $N \le s \le t \le Cs$ there exists a graph without a $K_{s,t}$-minor and list chromatic number greater than $(1-\varepsilon)(2s+t)$.
\end{theorem}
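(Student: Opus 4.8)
}

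The plan is to construct, for every admissible pair $(s,t)$ with $s$ large, a $K_{s,t}$-minor-free graph $G=G(s,t,\varepsilon)$ with $\chi_\ell(G)>(1-\varepsilon)(2s+t)$, the order of $G$ being allowed to grow (exponentially) with $s+t$. The guiding picture is Woodall's own extremal example --- a disjoint union of copies of $K_{s+t-1}$, which is $K_{s,t}$-minor-free and has list chromatic number $s+t-1$ --- onto which one superimposes a second, ``bipartite'' layer of structure that drives the list chromatic number up by roughly a further $s$ while introducing no $K_{s,t}$-minor. Note that the target $2s+t$ is far below the density threshold coming from the Kostochka--Thomason bound (a $K_{s,t}$-minor-free graph is $K_{s+t}$-minor-free, hence of average degree $O((s+t)\sqrt{\log(s+t)})$), so the obstacle is not density per se but realizing such a density with a genuinely list-coloring-hard graph.

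For the list-coloring lower bound I would generalize the classical witness that $K_{m,m}$ is not $k$-choosable once $m\ge\binom{2k-1}{k}$: with all $k$-subsets of a colour universe $[2k-1]$ as the lists on each side, any proper colouring uses on each side a set of colours meeting every $k$-subset, hence of size $\ge k$, and these two sets are disjoint --- impossible. The graph $G$ will carry an analogous adversarial list assignment over a universe of size slightly below $2s+t$, designed so that any proper colouring is forced to use pairwise (almost) disjoint colour sets on prescribed parts of $G$ whose sizes add up to more than the universe. With the combinatorial skeleton fixed this is a counting check; the slack absorbed by the factor $(1-\varepsilon)$ and by the requirement that $s$ be large is exactly what is needed to make the transversal/disjointness bookkeeping go through.

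The heart of the argument is showing $G\not\succeq K_{s,t}$. Suppose for contradiction there are pairwise disjoint nonempty connected sets $Z_1,\dots,Z_s,W_1,\dots,W_t$ with an edge between $Z_i$ and $W_j$ for all $i,j$. Two features of $G$ must be exploited: (i) each ``dense'' region of $G$ spans fewer than $s+t$ vertices, so no single region can host all $s+t$ branch sets; and (ii) distinct regions are linked only through small separators, so a branch set meeting several regions must use separator vertices, of which there are few. Classifying the $s+t$ branch sets by the regions they meet, and bounding how many of them can be ``spread'' across regions versus ``confined'' to one, one shows that the full bipartite pattern of adjacencies cannot be realized. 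It is here that the constant $2s+t$ is pinned down: the superimposed layer can be pushed to raise the degeneracy of $G$ (hence $\chi_\ell$) to essentially $2s+t$ while keeping every ``$(s+t)$-connected'' chunk just under the size that would force a $K_{s,t}$-minor, but no further.

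The main obstacle is precisely this tension: a large list chromatic number wants $G$ locally dense and of high degeneracy, whereas $K_{s,t}$-minor-freeness wants $G$ globally sparse with small separators. Engineering the bipartite layer so that it is list-hard enough to contribute $(1-o(1))(2s+t)$ to $\chi_\ell$, yet sufficiently clumped and well-separated that no family of $s+t$ pairwise cross-adjacent connected sets fits inside $G$, is the crux --- and this is why the theorem must take $s$ (and hence $t$) large and concedes the factor $(1-\varepsilon)$.
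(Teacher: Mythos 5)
Your proposal is a plan rather than a proof, and the plan as stated contains an internal contradiction that points to the key missing idea. You want every ``dense region'' of $G$ to span fewer than $s+t$ vertices (so that no clique forces a $K_{s,t}$-minor) while the regions are linked only through small separators, and simultaneously you want the construction to raise the degeneracy, hence $\chi_\ell$, to essentially $2s+t$. These two requirements are incompatible: a graph obtained by pasting pieces of at most $s+t-1$ vertices along small separators in a tree-like fashion has degeneracy at most $s+t-2$ and is therefore $(s+t-1)$-choosable, which is exactly Woodall's conjectured bound, not beyond it. The crux of the actual argument is that a single piece \emph{can} have about $2s+t$ vertices and still be $K_{s,t}$-minor-free, provided it is not a clique but an \emph{almost}-clique: concretely, the union of two cliques $A$ and $B$ with $|A|\approx(1-2\varepsilon)(s+t)$ and $|B|=s-1$, joined by the complement of a sparse random bipartite graph ($p=n^{-\delta}$). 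A probabilistic lemma shows that w.h.p.\ any family of $\varepsilon n$ disjoint small subsets of $A$ and of $B$ contains a pair with \emph{no} edge between them in the complement, and a counting argument (splitting the branch sets by whether they meet $A$ or $B$) turns this into $K_{s,t}$-minor-freeness. Nothing in your sketch supplies a piece with more than $s+t-1$ vertices that avoids the minor, so the list chromatic number cannot get past $s+t-1$.

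Your list-coloring mechanism is also not instantiated. The paper does not use a $K_{m,m}$-type transversal/disjointness count; it uses the Haj\'os-style amalgamation: one takes one copy $H(c)$ of the almost-clique for \emph{every} map $c:B\to[m+n-1]$, all copies sharing the clique $B$ (of size $s-1<s$, so gluing preserves minor-freeness), and gives each $a\in A(c)$ the list $[m+n-1]$ minus the colors $c(b)$ of its few non-neighbours $b\in B$. Then any $L$-coloring restricted to $B$ selects some $c$, and $H(c)$ has $m+n$ vertices but only $m+n-1$ available colors, forcing a repeated color on a non-adjacent pair, which the list construction forbids. If you want to salvage your approach, you need (1) a concrete $K_{s,t}$-minor-free graph on roughly $2s+t$ vertices with only $O(\varepsilon n)$ non-edges per vertex --- this is where the randomness is essential and where the bound $2s+t$ is really pinned down (the part $A$ can have size at most about $(1-2\varepsilon)(s+t)$ and the shared clique $B$ at most $s-1$) --- and (2) an explicit adversarial list assignment with a complete case analysis; neither is present.
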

For instance, if $s=t$, then the above implies that the maximum list chromatic number of graphs with no $K_{t,t}$-minor is at least as $3t-o(t)$, which substantially exceeds the conjectured upper bound of $2t-1$ for large $t$. It remains an interesting open problem whether Conjecture~\ref{con:lists} remains true if $s$ is fixed and $t$ is sufficiently large in terms of $s$. It would also be interesting to determine the smallest values of $s$ and $t$ (or of $s+t$) for which Conjecture~\ref{con:lists} fails, since the bounds coming from our proof of Theorem~\ref{main} are almost astronomical. In that regard, it could be interesting to study the following smallest open cases of Conjecture~\ref{con:lists}.

\begin{question}
Is every graph without a $K_{4,4}$-minor $7$-choosable? Is every graph without a $K_{3,5}$-minor $7$-choosable?
\end{question}

Regarding the true asymptotics of the list chromatic number of graphs with no $K_{s,t}$-minor, the following natural problem arises.

\begin{question}
Is it true that for all integers $1 \le s \le t$, every graph $G$ with $G \not\succeq K_{s,t}$ satisfies $\chi_\ell(G) \le 2s+t$?
\end{question}

In the remainder of this note we present the proof of Theorem~\ref{main}. It is probabilistic and relies on a few modifications of an argument previously used by the author in~\cite{steiner} to prove that the maximum list chromatic number of graphs without a $K_t$-minor is at least $2t-o(t)$, addressing a conjecture by Kawarabayashi and Mohar~\cite{kawarabayashimohar}, see also~\cite{op}, known as the \emph{List Hadwiger Conjecture}. 

\section{Proof of Theorem~\ref{main}}

Following standard notation, for a pair of natural numbers $m, n \in \mathbb{N}$ and a probability $p \in [0,1]$, we denote by $G(m,n,p)$ the bipartite Erd\H{o}s-Renyi graph, that is, a random bipartite graph $G$ with bipartition $A ; B$ such that $|A|=m$, $|B|=n$, and in which every pair $ab$ with $a \in A, b \in B$ is selected as an edge of $G$ with probability $p$, independently from all other such pairs. 

\begin{lemma}\label{random}
Let $\varepsilon \in (0,1)$, $C \ge 1$, $f \in \mathbb{N}$ and $\delta \in (0,1)$ be constants such that $f^2 \delta<1$. For every $n \in \mathbb{N}$, let $p=p(n):=n^{-\delta}$ and $m=m(n):=\lfloor Cn \rfloor$. Then with probability tending to $1$ as $n \rightarrow \infty$, the random graph $G=G(m(n),n,p(n))$ with bipartition $A \cup B$ simultaneously satisfies the following two properties:
\begin{itemize}
\item For every collection of pairwise disjoint non-empty sets $X_1,\ldots,X_k \subseteq A$, $Y_1,\ldots,Y_k \subseteq B$ such that $k \ge \varepsilon n$ and $\max\{|X_1|,\ldots,|X_k|,|Y_1|,\ldots,|Y_k|\} \le f$, there exists a pair of indices $(i,j) \in [k] \times [k]$ such that $G$ contains all the edges $xy, (x,y) \in X_i \times Y_j$.
\item $G$ has maximum degree at most $\varepsilon n$. 
\end{itemize}
\end{lemma}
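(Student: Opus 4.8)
The plan is to establish the two properties separately, each by a first‑moment (union bound) computation over the relevant ``bad events''.

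For the second property, I would simply note that for a fixed vertex $a \in A$, its degree is a binomial random variable $\mathrm{Bin}(n,p)$ with expectation $np = n^{1-\delta}$, which is $o(n)$ since $\delta \in (0,1)$. A Chernoff bound gives that $\Pr[\deg(a) > \varepsilon n]$ is exponentially small in $n$, and similarly for $b \in B$ the degree is $\mathrm{Bin}(m,p) = \mathrm{Bin}(\lfloor Cn \rfloor, n^{-\delta})$ with expectation $\Theta(n^{1-\delta}) = o(n)$. Since there are only $m + n = O(n)$ vertices, a union bound shows the maximum degree exceeds $\varepsilon n$ with probability $o(1)$.

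For the first property, I would bound the probability of the complementary event: that there exist pairwise disjoint non‑empty sets $X_1,\dots,X_k \subseteq A$ and $Y_1,\dots,Y_k \subseteq B$ with $k \ge \varepsilon n$ and all sets of size at most $f$, such that for \emph{every} pair $(i,j) \in [k]\times[k]$ at least one edge of $X_i \times Y_j$ is missing. The key observation is that it suffices to consider the case $k = \lceil \varepsilon n \rceil$ exactly (shrinking a larger family if needed) and, since smaller sets only make it easier to find a missing edge, we may as well think of the $X_i$ and $Y_j$ as having size at most $f$; the number of ways to choose such a family of disjoint sets is at most $\binom{m}{kf} \cdot (kf)! / \text{(something)}$, more crudely at most $(m^{f} )^{k} \cdot (n^{f})^{k} = (mn)^{fk} \le n^{3fk}$ for large $n$. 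For a \emph{fixed} such family, the events ``$X_i \times Y_j$ is not fully present in $G$'' for distinct pairs $(i,j)$ are \emph{not} independent in general, but they become independent once we restrict to pairs $(i,j)$ lying in a system of pairwise disjoint product sets — e.g. the $k$ diagonal pairs $(1,1),(2,2),\dots,(k,k)$ use pairwise disjoint edge slots $X_i \times Y_i$, hence are mutually independent. Each such event has probability $1 - p^{|X_i||Y_i|} \le 1 - p^{f^2} = 1 - n^{-f^2\delta}$. Therefore the probability that the fixed family is ``bad'' is at most $(1 - n^{-f^2\delta})^{k} \le \exp(-k\, n^{-f^2\delta}) = \exp(-\lceil \varepsilon n\rceil\, n^{-f^2\delta})$.

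Combining, the probability that the first property fails is at most
\[
n^{3fk} \cdot \exp\!\left(-\lceil \varepsilon n\rceil\, n^{-f^2\delta}\right)
\le \exp\!\left(3fk \ln n - \varepsilon n \cdot n^{-f^2\delta}\right)
= \exp\!\left(O(n \ln n) - \varepsilon\, n^{1 - f^2\delta}\right).
\]
Here the hypothesis $f^2\delta < 1$ is exactly what is needed: it guarantees $1 - f^2\delta > 0$, but that alone is not enough to beat the $n\ln n$ term, so I would instead be slightly more careful and only use a smaller but still linear‑sized subfamily, or — cleaner — observe that we do not need all $k$ diagonal events, just $k' := \lceil n^{f^2\delta}\ln^2 n \rceil$ of them (which is $o(n)$, hence available since $k \ge \varepsilon n$), giving failure probability at most $\exp(3fk\ln n) \cdot \exp(-k' n^{-f^2\delta}) = \exp(O(n\ln n) - \ln^2 n \cdot \Theta(n))$... and at this point I realize the counting term $n^{3fk}$ with $k = \Theta(n)$ is genuinely of order $\exp(\Theta(n\ln n))$, so the exponent in the missing‑edge term must also be of order at least $n \ln n$; this forces using on the order of $n^{f^2\delta}\cdot n \ln n = n^{1+f^2\delta}\ln n$ disjoint product events, which requires $k \ge n^{1+f^2\delta}\ln n$ — false. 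The fix, and the genuinely delicate point, is that we should \emph{not} union‑bound over all choices of the sets $X_i,Y_j$ independently of which pairs we test: instead, for the fixed family we get to use \emph{all} $k^2$ pairs $(i,j)$, and among these there is a large independent subsystem — in fact a perfect matching structure on $[k]\times[k]$ of size $k$ is disjoint, but we can do better by a more clever packing, or by noting $k^2 \gg k \cdot (\text{counting exponent per set})$. The main obstacle, then, is arranging the union bound so that the $\exp(-\Theta(\cdot))$ term genuinely dominates $\exp(\Theta(n\ln n))$; the resolution is to exploit that the number of \emph{relevant} pairs is $k^2 = \Theta(n^2)$ while each set is specified by only $O(f\ln n)$ bits, so that charging each of the $\le n$ sets to roughly $\Theta(n / \mathrm{polylog})$ disjoint product events among the $k^2$ available ones yields a total ``savings'' of $\exp(-\Theta(n^2 \cdot n^{-f^2\delta}))$ against a cost of only $\exp(O(n\ln n))$, which wins precisely because $2 - f^2\delta > 1$. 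Making this packing‑of‑independent‑pairs argument precise — choosing, for the fixed family, a sub‑collection of $\Theta(n^2)$ pairs whose product sets $X_i \times Y_j$ are pairwise disjoint as edge sets (greedily, each chosen pair kills at most $O(f \cdot k)$ others, so $\Omega(k^2 / (fk)) = \Omega(k/f)$... ) — is the crux; I would follow the analogous step in~\cite{steiner}, adapting the bipartite bookkeeping. Once that packing lemma is in hand, both bullet points follow by the union bound, and a final union over the two failure events completes the proof.
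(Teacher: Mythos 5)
Your treatment of the maximum-degree property is correct and is the same as the paper's (Chernoff plus a union bound over the $O(n)$ vertices). The genuine gap is in the first property, and it sits exactly where you flag ``the crux'': you assert that the events ``$X_i\times Y_j$ is not fully present in $G$'' for distinct pairs $(i,j)$ are \emph{not} independent in general, and that only a matching-like subsystem such as the diagonal $(1,1),\dots,(k,k)$ uses disjoint edge slots. This is false. For any two distinct pairs $(i,j)\neq(i',j')$ we have $i\neq i'$ or $j\neq j'$; since the $X_i$ are pairwise disjoint and the $Y_j$ are pairwise disjoint, in either case the edge sets $X_i\times Y_j$ and $X_{i'}\times Y_{j'}$ are disjoint. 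Hence all $k^2$ events depend on pairwise disjoint collections of independent edge indicators and are mutually independent. This is precisely what the paper uses: for a fixed family the failure probability is at most $(1-p^{f^2})^{k^2}\le\exp(-p^{f^2}\varepsilon^2n^2)=\exp(-\varepsilon^2 n^{2-f^2\delta})$, which dominates the $\exp(O(n\ln n))$ counting term because $f^2\delta<1$ gives $2-f^2\delta>1$ --- the quantitative mechanism you correctly identify as necessary but never actually establish.

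As written, your argument for the first bullet does not close: the diagonal-only bound $\exp(-\varepsilon n^{1-f^2\delta})$ loses to the union bound, the greedy packing you sketch would yield only $\Omega(k/f)=\Omega(n)$ pairwise disjoint products (again insufficient), and the needed $\Theta(n^2)$ disjoint product events are deferred to an unproved packing claim. No packing lemma is needed; the observation in the previous paragraph supplies all $k^2$ independent events at once and completes the proof along the lines you set up.
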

\begin{proof}
\noindent
It will clearly be sufficient to prove that for each of the two events above individually, the probability for them not to occur tends to $0$ as $n \rightarrow \infty$. It then follows using a union bound that also the probability that at least one of the two events does not occur tends to $0$ as $n \rightarrow \infty$, proving the claim of the lemma.
\begin{itemize}
\item Let us first consider the probability event $E_n$ that $G$ does not satisfy the property claimed by the first item. We want to show that $\mathbb{P}(E_n) \rightarrow 0$ as $n \rightarrow \infty$. So consider a fixed collection $X_1,\ldots,X_k \subseteq A, Y_1,\ldots,Y_k \subseteq B$ of disjoint non-empty sets, where $k \ge \varepsilon n$ and $\max\{|X_1|,\ldots,|X_k|,|Y_1|,\ldots,|Y_k|\} \le f$. Let $E(X_1,\ldots,X_k,Y_1,\ldots,Y_k)$ be the probability event ``there exists no pair $(i,j) \in [k] \times [k]$ such that all the edges $xy$ with $x \in X_i$ and $y \in Y_j$ are included in $G$''. Fixing a pair of indices $(i,j) \in [k] \times [k]$, clearly the probability of the event that ``$X_i$ is not fully connected to $Y_j$'' equals $1-p^{|X_i||Y_j|} \le 1-p^{f^2}$.  Since these events are independent for different choices of $(i,j)$, it follows that 
$$\mathbb{P}(E(X_1,\ldots,X_k,Y_1,\ldots,Y_k)) \le (1-p^{f^2})^{k^2}$$ $$\le (1-p^{f^2})^{\varepsilon^2n^2}\le \exp(-p^{f^2}\varepsilon^2 n^2)=\exp(-\varepsilon^2n^{2-f^2\delta}).$$
With a (very) rough estimate, there are at most $$(m+n+1)^{m+n}=\exp(\ln(m+n+1)(m+n)) \le \exp(\ln((C+1)n+1)(C+1)n)$$ different ways to select the sets $X_1,\ldots,X_k,Y_1,\ldots,Y_k$. Hence, applying a union bound we find that 
$$\mathbb{P}(E_n) \le \exp(\ln((C+1)n+1)(C+1)n-\varepsilon^2n^{2-f^2\delta}).$$
The right hand side of the above inequality tends to $0$ as $n \rightarrow \infty$, since $f^2\delta<1$ and hence $\varepsilon^2n^{2-f^2\delta}=\Omega(n^{2-f^2\delta})$ grows faster than $\ln((C+1)n+1)(C+1)n=O(n\ln n)$. This proves that $G$ satisfies the properties claimed by the first item w.h.p., as required.
\item To show that also the property claimed by the second item holds true w.h.p., consider the probability that a fixed vertex $x \in A \cup B$ has more than $\varepsilon n$ neighbors in $G$. The degree of $x$ in $G(m,n,p)$ is distributed like a binomial random variable $B(n,p)$ if $x \in A$ and like $B(m,p)$ if $x \in B$. Hence the expected degree of $x$ is $np=n^{1-\delta}$ if $x \in A$ and $mp \in [n^{1-\delta},Cn^{1-\delta}]$ if $x \in B$. Hence, $\mathbb{E}(d_G(x))$ is smaller than $\frac{\varepsilon n}{2}$ for $n$ sufficiently large in terms of $\varepsilon$, $\delta$ and $C$. Applying Chernoff's bound we find for every sufficiently large $n$:
$$\mathbb{P}(d_G(x)>\varepsilon n ) \le \mathbb{P}(d_G(x)>2\mathbb{E}(d_G(x))) \le \exp\left(-\frac{1}{3}\mathbb{E}(d_G(x))\right) \le \exp\left(-\frac{1}{3}n^{1-\delta}\right).$$
Since this bound holds for every choice of $x \in A \cup B$, applying a union bound we find that the probability that $G$ has a vertex of degree more than $\varepsilon n$ is at most
$$(m+n)\exp\left(-\frac{1}{3}n^{1-\delta}\right) \le \exp\left(\ln((C+1)n)-\frac{1}{3}n^{1-\delta}\right)$$ which tends to $0$ as $n \rightarrow \infty$, as desired (here we used that $\delta<1$ and hence $n^{1-\delta}$ grows faster than $\ln((C+1)n)$). 
%it suffices to note that $G(n,n,p)$ can be embedded as a subgraph into the Erd\H{o}s-Renyi random graph $G(2n,p)$. Since $p=p(n)=n^{-\delta} \rightarrow 0$ for $n \rightarrow \infty$, it follows from well-known results on random graphs (compare e.g.~\cite{bollobas}, Chapter VII, Section 4, page 135, Theorem~9) that the maximum degree of $G(2n,p)$ (and hence of $G(n,n,p)$) is less than $\varepsilon n$ w.h.p.
\end{itemize}

\end{proof}

In the next intermediate result we derive a useful deterministic statement from Lemma~\ref{random} about the existence of graphs with certain properties, which then come in handy when we construct the lower-bound examples for Theorem~\ref{main}.

\begin{lemma}\label{cor}
For every $\varepsilon \in (0,1)$ and $C \ge 1$, there exists $n_0=n_0(\varepsilon,C)$ such that for all integers $m,n$ satisfying $n_0 \le n \le m \le Cn$, there exists a graph $H$ whose vertex-set $V(H)=A \cup B$ is partitioned into two disjoint sets $A$ of size $m$ and $B$ of size $n$, and such that the following properties hold:
\begin{itemize}
\item Both $A$ and $B$ form cliques of $H$,
\item every vertex in $H$ has at most $\varepsilon n$ non-neighbors in $H$, and
\item for all integers $1 \le s \le t$ such that $n \le s$ and $m \le (1-2\varepsilon)(s+t)$, $H$ does not contain $K_{s,t}$ as a minor. 
\end{itemize}
\end{lemma}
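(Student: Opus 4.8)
The plan is to realise $H$ as the ``bipartite complement'' of the random graph from Lemma~\ref{random}. Fix $\varepsilon_0:=\varepsilon/4$, choose $f=f(\varepsilon,C)$ large enough that $(m+n)/f\le\varepsilon_0 n$ for all $n\le m\le Cn$, and set $\delta:=\tfrac1{2f^2}$, so that $f^2\delta<1$. Apply Lemma~\ref{random} with these parameters to obtain, for every sufficiently large $n$, a bipartite graph $G_0=G(\lfloor Cn\rfloor,n,n^{-\delta})$ on $A_0\cup B$ satisfying both of its conclusions; given $m$ with $n\le m\le Cn$, restrict $A_0$ to an arbitrary $m$-subset $A$ (both conclusions pass to the induced subgraph $G:=G_0[A\cup B]$, since shrinking $A$ only deletes collections in the first property and can only lower degrees in the second), and define $H$ on $A\cup B$ by turning $A$ and $B$ into cliques and joining $x\in A$ to $y\in B$ exactly when $xy\notin E(G)$. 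The first two bullets are then immediate: $A$ and $B$ are cliques by construction, and the $H$-non-neighbours of a vertex $v$ are precisely its $G$-neighbours (there are none inside $v$'s own part), of which there are at most $\varepsilon_0 n\le\varepsilon n$. For $\varepsilon\ge\tfrac12$ the third bullet is vacuous, so assume $\varepsilon<\tfrac12$.

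For the third bullet, the key observation is: if $Z\subseteq A$ and $Z'\subseteq B$ are disjoint and every pair in $Z\times Z'$ is an edge of $G$, then $H$ has no edge between $Z$ and $Z'$; and conversely, any two disjoint non-empty vertex sets of $H$ with no $H$-edge between them must have one contained in $A$ and the other in $B$ (because $A,B$ are cliques, two sets both meeting $A$, or both meeting $B$, are automatically joined in $H$) and must be fully joined in $G$. Suppose, for contradiction, that $H\succeq K_{s,t}$ with $n\le s\le t$ and $m\le(1-2\varepsilon)(s+t)$, with branch sets $U_1,\dots,U_s$ and $W_1,\dots,W_t$, so that every $U_i$ is $H$-adjacent to every $W_j$. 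Call a branch set \emph{$A$-small} if it is a subset of $A$ of size at most $f$, and \emph{$B$-small} analogously. The goal is to produce either $\ge\varepsilon_0 n$ $A$-small sets among the $U_i$ together with $\ge\varepsilon_0 n$ $B$-small sets among the $W_j$, or the same with $A$ and $B$ interchanged; feeding $\varepsilon_0 n$ of each into the first property of Lemma~\ref{random} then yields a pair $(i,j)$ with $U_i$ fully joined to $W_j$ in $G$, which by the key observation contradicts $U_i\sim_H W_j$.

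To get this dichotomy I would record a few counting bounds, all using only that the $s+t$ branch sets are disjoint and non-empty together with the hypotheses $m\le(1-2\varepsilon)(s+t)$ and $s+t\ge 2n$: at most $n$ branch sets meet $B$, so at least $(s+t)-n-m/f$ are $A$-small; at most $m$ branch sets meet $A$, so at least $2\varepsilon(s+t)\ge 4\varepsilon n$ are contained in $B$ and hence at least $15\varepsilon_0 n$ are $B$-small; and the number of branch sets that are neither $A$-small nor $B$-small (i.e.\ they meet both parts, or have size $>f$) is at most $(1-15\varepsilon_0)n$. Writing $u_A,u_B$ and $w_A,w_B$ for the counts of $A$-small and $B$-small branch sets on the $U$-side and the $W$-side respectively, these bounds give $u_A+w_A\ge(1-\varepsilon_0)n$ and $u_B+w_B\ge 15\varepsilon_0 n$, and --- since $s,t\ge n$ while a single side cannot contain more than $(1-15\varepsilon_0)n$ of the ``other'' sets --- neither the pair $u_A,u_B$ nor the pair $w_A,w_B$ can consist of two numbers both below $\varepsilon_0 n$. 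A short case check on which of these four quantities reach $\varepsilon_0 n$ then forces one of the two alternatives of the dichotomy, the contrary cases contradicting one of the two displayed lower bounds (here one uses $\varepsilon_0<\tfrac13$). I expect the main work to be exactly this bookkeeping: pinning down $\varepsilon_0$, the threshold $f$, and the $n$-linear estimates so that every inequality holds simultaneously across the whole parameter range $n\le m\le Cn$, $n\le s\le t$. Conceptually the point is simple: an excluded $K_{s,t}$-minor with $m$ small relative to $s+t$ is forced to leave many tiny branch sets entirely inside $A$ and many entirely inside $B$, and Lemma~\ref{random} guarantees that two such families are always completely joined in $G$ for some pair --- exactly the configuration that is absent in $H$.
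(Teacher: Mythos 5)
Your proposal is correct and follows essentially the same route as the paper: both take the complement of the random bipartite graph from Lemma~\ref{random}, get the first two bullets for free, and obtain the third by showing that any $K_{s,t}$-model must leave $\Omega(n)$ branch sets of one colour class as small subsets of $A$ and $\Omega(n)$ of the other class as small subsets of $B$, which the first property of Lemma~\ref{random} rules out. The paper organizes the final dichotomy by splitting on whether $|\mathcal{Z}_{A,1}|\le(1-2\varepsilon)s$ or $|\mathcal{Z}_{A,2}|\le(1-2\varepsilon)t$ instead of your four-count bookkeeping, but the substance (and the choice of $f=\Theta(C/\varepsilon)$, $\delta=\Theta(1/f^2)$) is the same.
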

\begin{proof}
Let $f:=\lceil \frac{C}{\varepsilon} \rceil \in \mathbb{N}$ and $\delta:=\frac{\varepsilon^2}{4C^2}$. Then $f^2\delta <1$, and hence we may apply Lemma~\ref{random}. It follows directly that there exists $n_0=n_0(\varepsilon,C) \in \mathbb{N}$ such that for every $n \ge n_0$ there exists a bipartite graph $G'$, whose bipartition classes $A'$ and $B'$ are of size $\lfloor Cn \rfloor$ and $n$ respectively, and such that the following hold: 
\begin{itemize}
\item For every collection of pairwise disjoint non-empty sets $X_1,\ldots,X_k \subseteq A', Y_1,\ldots,Y_k \subseteq B'$ such that $k \ge \varepsilon n$ and $\max\{|X_1|,\ldots,|X_k|,|Y_1|,\ldots,|Y_k|\} \le f$, there exists a pair $(i,j) \in [k] \times [k]$ such that $G'$ contains all the edges $xy, (x,y) \in X_i \times Y_j$.
\item $G'$ has maximum degree at most $\varepsilon n$. 
\end{itemize}
Since $n \le m \le \lfloor Cn \rfloor$, we may select and fix a subset $A \subseteq A'$ such that $|A|=m$. Also, put $B:=B'$. In the following, let $G:=G'[A \cup B]$ denote the induced subgraph of $G'$ with bipartition $A; B$.
We now define $H$ as the complement of $G$ (also with vertex-set $A \cup B$). It is clear from the definition of $G$ that $A$ and $B$ form cliques in $H$ and have the required size, verifying the first item in the claim of the lemma. The second item follows directly from the fact that $\Delta(G) \le \Delta(G') \le \varepsilon n$.

It hence remains to verify the last item. Towards a contradiction, suppose that there exist numbers $1 \le s \le t $ with $n \le s$, $m \le (1-2\varepsilon)(s+t)$, such that $H$ contains $K_{s,t}$ as a minor. This implies that there exists a collection $\mathcal{Z}=\mathcal{Z}_1 \cup \mathcal{Z}_2$ of non-empty and pairwise disjoint subsets of $V(H)$ such that $|\mathcal{Z}_1|=s$, $|\mathcal{Z}_2|=t$ and such that for every pair $Z_1 \in \mathcal{Z}_1, Z_2 \in \mathcal{Z}_2$, there exists at least one edge in $H$ connecting a vertex in $Z_1$ to a vertex in $Z_2$. 

Let us now consider $\mathcal{Z}_{A,1}:=\{Z \in \mathcal{Z}_1|Z \cap A \neq \emptyset\}$ and $\mathcal{Z}_{A,2}:=\{Z \in \mathcal{Z}_2|Z \cap A \neq \emptyset\}$. Since the sets in $\mathcal{Z}$ are pairwise disjoint, we can see that $|\mathcal{Z}_{A,1}|+|\mathcal{Z}_{A,2}| \le |A|=m \le (1-2\varepsilon)(s+t)$. We therefore must have $|\mathcal{Z}_{A,1}| \le (1-2\varepsilon)s$ or $|\mathcal{Z}_{A,2}| \le (1-2\varepsilon)t$. In the following, we distinguish these two cases and lead both to a contradiction. This will then show that our assumption on the existence of $s$ and $t$ above was incorrect, and hence complete the proof that $H$ satisfies all three properties required by the lemma. 

\medskip

\textbf{Case 1.} Suppose first that $|\mathcal{Z}_{A,1}| \le (1-2\varepsilon)s$. Then this means that $|\mathcal{Z}_1 \setminus \mathcal{Z}_{A,1}| \ge 2\varepsilon s \ge 2\varepsilon n$. The sets in $\mathcal{Z}_1 \setminus \mathcal{Z}_{A,1}$ are exactly those $Z \in \mathcal{Z}_1$ such that $Z \subseteq B$. Since $|B|=n$, and since the sets in $\mathcal{Z}_1 \setminus \mathcal{Z}_{A,1}$ are pairwise disjoint, it follows that $\mathcal{Z}_1 \setminus \mathcal{Z}_{A,1}$ contains at most $\varepsilon n$ sets of size more than $\frac{1}{\varepsilon}$. Consequently, at least $2\varepsilon n -\varepsilon n=\varepsilon n$ sets in $\mathcal{Z}_1 \setminus \mathcal{Z}_{A,1}$ have size at most $\frac{1}{\varepsilon} \le f$. Fix a list $Y_1,\ldots,Y_k \subseteq B$ of $k=\lceil \varepsilon n \rceil$ distinct sets in $\mathcal{Z}_1\setminus \mathcal{Z}_{A,1}$, each of size at most $f$. 

Next, consider the set $\mathcal{Z}_{B,2}:=\{Z \in \mathcal{Z}_2|Z \cap B \neq \emptyset\}$. Since the elements of $(\mathcal{Z}_1\setminus \mathcal{Z}_{A,1}) \cup \mathcal{Z}_{B,2}$ are pairwise disjoint and all intersect $B$, it follows that $|\mathcal{Z}_1\setminus \mathcal{Z}_{A,1}|+|\mathcal{Z}_{B,2}| \le |B|=n$, and hence that $|\mathcal{Z}_{B,2}| \le n-|\mathcal{Z}_1\setminus \mathcal{Z}_{A,1}| \le n-2\varepsilon n$. Since $t \ge s \ge n$, we conclude that $|\mathcal{Z}_2\setminus\mathcal{Z}_{B,2}| \ge t-(n-2\varepsilon n)=t-n+2\varepsilon n \ge 2 \varepsilon n$. All the sets $Z \in \mathcal{Z}_2\setminus \mathcal{Z}_{B,2}$ are fully included in $A$. Therefore, and since $|A|=m \le Cn$, there can be at most $\varepsilon n$ sets in $\mathcal{Z}_2\setminus \mathcal{Z}_{B,2}$ whose size exceeds $\frac{C}{\varepsilon}$. Hence, at least $2\varepsilon n-\varepsilon n=\varepsilon n$ sets in $\mathcal{Z}_2\setminus \mathcal{Z}_{B,2}$ have size at most $\frac{C}{\varepsilon} \le f$. Let $X_1,\ldots,X_k \subseteq A$ be $k=\lceil \varepsilon n\rceil$ distinct sets in $\mathcal{Z}_2\setminus \mathcal{Z}_{B,2}$, each of size at most $f$. By the property of $G'$ listed in the beginning of this proof, we know that there exists a pair $(i,j) \in [k] \times [k]$ such that all the edges $xy, (x,y) \in X_i \times Y_j$ are contained in $G'$ (and hence in $G$). This, however, means that there exists no edge in $H$ which connects a vertex in $X_i \in \mathcal{Z}_2$ to a vertex in $Y_j \in \mathcal{Z}_1$, contradicting our initial assumptions on the collection $\mathcal{Z}=\mathcal{Z}_1 \cup \mathcal{Z}_2$. This contradiction concludes the proof in Case~1.

\medskip 

The analysis for Case~2 below is almost identical to the analysis of Case~1, except for the fact that $\mathcal{Z}_1$ and $\mathcal{Z}_2$ play interchanged roles\footnote{We avoided reducing the second case to the first with a simple ``w.l.o.g.'' assumption, since formally the cases are not excatly symmetric (note that the sizes $s$ and $t$ of $\mathcal{Z}_1$ and $\mathcal{Z}_2$ can be different)}. 

\medskip

\textbf{Case 2.} Suppose now that $|\mathcal{Z}_{A,2}| \le (1-2\varepsilon)t$. This implies $|\mathcal{Z}_2 \setminus \mathcal{Z}_{A,2}| \ge 2\varepsilon t \ge 2\varepsilon s \ge 2\varepsilon n$. Note that all the sets in $\mathcal{Z}_2 \setminus \mathcal{Z}_{A,2}$ are fully included in $B$. Since the sets in $\mathcal{Z}_2 \setminus \mathcal{Z}_{A,2}$ are pairwise disjoint, it follows that there are at most $\varepsilon n$ sets of size more than $\frac{1}{\varepsilon}$ in $\mathcal{Z}_2 \setminus \mathcal{Z}_{A,2}$. Thus there are at least $2\varepsilon n -\varepsilon n=\varepsilon n$ sets in $\mathcal{Z}_2 \setminus \mathcal{Z}_{A,2}$ of size at most $\frac{1}{\varepsilon} \le f$. Fix $k=\lceil \varepsilon n \rceil$ distinct sets $Y_1,\ldots,Y_k \subseteq B$ in $\mathcal{Z}_2\setminus \mathcal{Z}_{A,2}$, each of size at most $f$. 

Let us now consider the subcollection $\mathcal{Z}_{B,1}:=\{Z \in \mathcal{Z}_1|Z \cap B \neq \emptyset\}$. The elements of $\mathcal{Z}_{B,1} \cup (\mathcal{Z}_2\setminus \mathcal{Z}_{A,2})$ all intersect $B$, and hence $|\mathcal{Z}_{B,1}|+|\mathcal{Z}_{2}\setminus \mathcal{Z}_{A,2}| \le |B|=n$. Therefrom, we have $|\mathcal{Z}_{B,1}| \le n-|\mathcal{Z}_2\setminus \mathcal{Z}_{A,2}| \le n-2\varepsilon n$. Since $s \ge n$, it follows that $|\mathcal{Z}_1\setminus\mathcal{Z}_{B,1}| \ge s-(n-2\varepsilon n)=s-n+2\varepsilon n \ge 2 \varepsilon n$. By definition, every set $Z \in \mathcal{Z}_1\setminus \mathcal{Z}_{B,1}$ is fully included in $A$. Due to $|A|=m \le Cn$, this implies that there are at most $\varepsilon n$ sets in $\mathcal{Z}_1\setminus \mathcal{Z}_{B,1}$ whose size exceeds $\frac{C}{\varepsilon}$. Hence, at least $2\varepsilon n-\varepsilon n=\varepsilon n$ sets in $\mathcal{Z}_1\setminus \mathcal{Z}_{B,1}$ have size at most $\frac{C}{\varepsilon} \le f$. Let $X_1,\ldots,X_k \subseteq A$ be $k=\lceil \varepsilon n\rceil$ distinct sets in $\mathcal{Z}_1\setminus \mathcal{Z}_{B,1}$, each of size at most $f$. Applying the first of the two properties of $G'$ as mentioned above, it follows that there must be $(i,j) \in [k] \times [k]$ such that all pairs $xy, (x,y) \in X_i \times Y_j$ are contained as edges in $G$. Since $H$ and $G$ are complements, this means that there exists no edge in $H$ which connects a vertex in $X_i \in \mathcal{Z}_1$ to a vertex in $Y_j \in \mathcal{Z}_2$, contradicting our initial assumptions. This contradiction concludes the proof also in Case~2.
\end{proof}

We are now almost ready for proving Theorem~\ref{main}. The only remaining ingredient is a simple lemma, which states that glueing two $K_{s,t}$-minor-free graphs together along a sufficiently small clique separator results in a graph that is again $K_{s,t}$-minor-free. We strongly suspect that this statement has appeared elsewhere before, but we decided to include the (simple) proof here for the reader's convenience. 

\begin{lemma}\label{glue}
Let $1 \le s \le t$ be integers, and let $G_1$ and $G_2$ be graphs not containing $K_{s,t}$ as a minor. Let $C:=V(G_1) \cap V(G_2)$. If $C$ forms a clique in both $G_1$ and $G_2$, and if $|C|<s$, then the graph $G_1 \cup G_2$ also does not contain $K_{s,t}$ as a minor. 
\end{lemma}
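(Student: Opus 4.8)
The plan is to argue by contradiction. Suppose $G := G_1 \cup G_2$ contains $K_{s,t}$ as a minor, witnessed by pairwise disjoint non-empty connected branch sets $(Z_f)_{f \in V(K_{s,t})}$, and write $V(K_{s,t}) = S \cup T$ with $|S|=s$ and $|T|=t$. I will show that then one of $G_1$, $G_2$ already contains $K_{s,t}$ as a minor, which is the desired contradiction. Two elementary observations about the clique sum will be used throughout. First, every edge of $G$ with both endpoints in $V(G_1)$ already lies in $G_1$: if it belonged only to $G_2$, both endpoints would lie in $V(G_1)\cap V(G_2)=C$, and since $C$ is a clique of $G_1$ the edge is in $G_1$ after all; thus $G_1=G[V(G_1)]$, and symmetrically $G_2=G[V(G_2)]$. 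Second, writing $P_1:=V(G_1)\setminus C$ and $P_2:=V(G_2)\setminus C$, any edge of $G$ meeting $P_1$ lies in $G_1$ (its $P_1$-endpoint is not in $V(G_2)$), so in particular there is no edge of $G$ between $P_1$ and $P_2$, i.e. $C$ separates $P_1$ from $P_2$. (We may assume $P_1,P_2\neq\emptyset$, since otherwise $G$ equals $G_2$ or $G_1$ and there is nothing to prove.)

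Since each $G[Z_f]$ is connected and $C$ separates $P_1$ from $P_2$, every branch set disjoint from $C$ is contained entirely in $P_1$ or entirely in $P_2$. This partitions the index set as $V(K_{s,t})=I\cup J_1\cup J_2$, where $I=\{f: Z_f\cap C\neq\emptyset\}$, $J_1=\{f: Z_f\subseteq P_1\}$, $J_2=\{f: Z_f\subseteq P_2\}$. As the $Z_f$ are disjoint and each $f\in I$ claims a distinct vertex of $C$, we get $|I|\le|C|<s\le t$, so both $S$ and $T$ contain an index outside $I$; fixing $f_1\in S\setminus I$ and $f_2\in T\setminus I$, the edge of $K_{s,t}$ between them forces (by the second observation) that $Z_{f_1}$ and $Z_{f_2}$ lie on the same side of the separator, say both in $P_1$ — renaming $G_1\leftrightarrow G_2$ if necessary, which is legitimate since the hypotheses are symmetric. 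A short argument then gives $J_2=\emptyset$: any $g\in J_2$ lies in $S$ or in $T$, hence is adjacent in $K_{s,t}$ to $f_2$ or to $f_1$, which would require an edge of $G$ crossing the separator.

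It remains to convert the model into a $K_{s,t}$-model of $G_1$. Every branch set now either lies in $V(G_1)$ (if $f\in J_1$) or at least meets $C$ (if $f\in I$), but an $I$-branch set may still protrude into $P_2$, so I truncate it: set $Z_f':=Z_f$ for $f\in J_1$ and $Z_f':=Z_f\cap V(G_1)$ for $f\in I$, which is non-empty since $Z_f\cap C\neq\emptyset$. The two things to check are connectivity and adjacency. For connectivity when $f\in I$: given two vertices of $Z_f\cap V(G_1)$, take a path between them in $G[Z_f]$ and replace every maximal excursion into $P_2$ by a single edge; such an excursion must enter and leave $P_2$ through vertices of $C$, and these are joined by an edge of $G_1$ since $C$ is a clique of $G_1$, so the resulting walk stays inside $Z_f'$ and uses only edges of $G_1$. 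For adjacency, let $fg\in E(K_{s,t})$. If at least one of $f,g$ lies in $J_1$, the original edge of $G$ between $Z_f$ and $Z_g$ touches $P_1$, hence lies in $G_1$, and its endpoint in the other branch set automatically lies in $V(G_1)$, so it is an edge between $Z_f'$ and $Z_g'$ in $G_1$. If instead both $f,g\in I$, the original connecting edge may be buried inside $G_2$ and thus unusable; but here we may simply use the edge joining arbitrarily chosen representatives $c_f\in Z_f'\cap C$ and $c_g\in Z_g'\cap C$, which exists because $C$ is a clique of $G_1$. Hence $(Z_f')_f$ is a $K_{s,t}$-model of $G_1$, contradicting the hypothesis.

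The only genuine obstacle is this last step — verifying that the truncated system is still a valid minor model inside $G_1$, and in particular handling edges of $K_{s,t}$ between two branch sets that both meet $C$. The point to get right is that for two such branch sets one should not try to salvage the original connecting edge (which can lie entirely within $G_2$), but instead exploit that $C$ is a clique to route directly through $C$; the clique hypothesis is likewise what makes the rerouting in the connectivity check go through. Everything preceding this step is routine bookkeeping with the separator $C$ and the bound $|I|\le|C|<s$.
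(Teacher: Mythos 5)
Your proposal is correct and follows essentially the same route as the paper's proof: locate branch sets from both sides of the bipartition that avoid $C$ (using $|C|<s\le t$), force them onto one side of the separator, conclude every branch set meets $V(G_1)$, and then truncate to $V(G_1)$, using the clique $C$ both to restore connectivity of the truncated sets and to supply the missing adjacencies between branch sets that meet $C$. The only (cosmetic) difference is that you organize the adjacency check by which of $f,g$ lie in $J_1$ versus $I$, while the paper cases on where the endpoints of the witnessing edge lie; the substance is identical.
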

\begin{proof}
Towards a contradiction, suppose that $G:=G_1 \cup G_2$ contains $K_{s,t}$ as a minor. By definition, this means that there exists a collection of disjoint non-empty subsets $\mathcal{Z}=\mathcal{Z}_1 \cup \mathcal{Z}_2$ of $V(G_1) \cup V(G_2)$ such that $G[Z]$ is connected for every $Z \in \mathcal{Z}$, $|\mathcal{Z}_1|=s, |\mathcal{Z}_2|=t$, and for every pair $X \in \mathcal{Z}_1, Y \in \mathcal{Z}_2$, there exists an edge of $G$ with endpoints in $X$ and $Y$. Since $|C|<s=|\mathcal{Z}_1| \le |\mathcal{Z}_2|$, there exist $Z_1 \in \mathcal{Z}_1$ and $Z_2 \in \mathcal{Z}_2$ such that $Z_1 \cap C=Z_2 \cap C=\emptyset$. Since $Z_1$ and $Z_2$ induce connected subgraphs of $G$, this means that for both $i \in \{1,2\}$ we have $Z_i \subseteq V(G_1) \setminus C$ or $Z_i \subseteq V(G_2) \setminus C$ (since no edge in $G$ connects a vertex in $V(G_1) \setminus C$ to a vertex in $V(G_2) \setminus C$). Furthermore, by assumption there exists an edge with endpoints in $Z_1$ and $Z_2$, which implies that either $Z_1, Z_2 \subseteq V(G_1) \setminus C$, or $Z_1, Z_2 \subseteq V(G_2) \setminus C$. W.l.o.g. (possibly after renaming $G_1$ and $G_2$) we may assume from now on that $Z_1, Z_2 \subseteq V(G_1) \setminus C$. Then every $Z \in \mathcal{Z} \setminus \{Z_1,Z_2\}$ must also be linked with an edge to one of $Z_1$ or $Z_2$, and hence cannot be entirely contained in $V(G_2) \setminus C$. Hence, we have $Z \cap V(G_1) \neq \emptyset$ for every $Z \in \mathcal{Z}$. We now claim that the collection $\mathcal{Z}':=\{Z \cap V(G_1)|Z \in \mathcal{Z}\}$ of disjoint vertex-subsets in $G_1$ certifies that $G_1$ also contains a $K_{s,t}$-minor.

Firstly, for every $Z \in \mathcal{Z}$, the graph $G_1[Z \cap V(G_1)]$ is connected. Namely, we either have $Z \subseteq V(G_1)$, and hence $G_1[Z\cap V(G_1)]=G_1[Z]=G[Z]$ is a connected subgraph of $G_1$, or we have $Z \cap C \neq \emptyset$. In the latter case, the facts that $G[Z]$ is connected and that $C$ is a clique yield that $Z \cap V(G_1)$ also induces a connected subgraph of $G$ (and hence of $G_1$), as desired. 

Secondly, for every pair of distinct sets $X, Y \in \mathcal{Z}$ with $X \in \mathcal{Z}_1, Y \in \mathcal{Z}_2$, there exists an edge $e$ in $G$ with endpoints in $X$ and $Y$. If these endpoints both lie in $V(G_1)$, then the same edge links $X \cap V(G_1)$ and $Y \cap V(G_1)$ in $G_1$. Otherwise, at least one endpoint of $e$ is contained in $V(G_2)\setminus C$, and in this case the connectivity of $G[X], G[Y]$ implies that $X \cap C \neq \emptyset \neq Y \cap C$. Since $C$ is a clique, the latter directly implies that there is an edge in $G_1$ joining a vertex in $X \cap C$ to a vertex in $Y \cap C$, again certifying that $X \cap V(G_1)$ and $X \cap V(G_2)$ are linked by an edge in $G_1$. All in all, this shows that the collection $\mathcal{Z}'$ certifies the existence of a $K_{s,t}$-minor in $G_1$, a contradiction to the assumptions made in the lemma. This concludes the proof. 
\end{proof}

\begin{proof}[Proof of Theorem~\ref{main}]
Let fixed constants $\varepsilon \in (0,1)$ and $C \ge 1$ be given, and assume w.l.o.g. that $\varepsilon<\frac{1}{2}$. Define $\varepsilon':=\frac{\varepsilon}{2}$ and $C':=2C+2 \ge 1$. Let $n_0=n_0(\varepsilon', C') \in \mathbb{N}$ be chosen as in Lemma~\ref{cor} applied with parameters $\varepsilon', C'$, and define $N:=\max\{n_0+1,\lceil \frac{4}{\varepsilon} \rceil\} \in \mathbb{N}$. 

Let us now go about proving the claim of Theorem~\ref{main}. For that purpose, let $s,t$ be any given integers such that $N \le s \le t \le Cs$, and let us show that there exists a graph with no $K_{s,t}$-minor and list chromatic number greater than $(1-\varepsilon)(2s+t)$. For that purpose, define $n:=s-1$ and $m:= \lfloor (1-\varepsilon)(s+t)\rfloor$, noting that we have $n_0 \le n \le m$ as well as $m \le s+t \le (C+1)s=(C+1)(n+1) \le C'n$. 

We may therefore apply Lemma~\ref{cor} to the parameters $m$ and $n$, which yields a graph $H$ whose vertex-set is partitioned into two non-empty sets $A$ and $B$ of size $m$ and $n$ respectively, such that both $A$ and $B$ form cliques in $H$, every vertex in $H$ has at most $\varepsilon' n$ non-neighbors, and $H$ is $K_{s,t}$-minor free (since $n \le s$ and $m \le (1-\varepsilon)(s+t)=(1-2\varepsilon')(s+t)$, by definition of $m$ and $n$). 

For each possible choice of an assignment $c \in [m+n-1]^B$ of colors from $[m+n-1]$ to vertices in $B$, denote by $H(c)$ an isomorphic copy of $H$, such that the vertex-set of $H(c)$ decomposes into the cliques $A(c)$ and $B$ of size $m$ and $n$, respectively. More precisely, the distinct copies $H(c), c \in [m+n-1]^B$ of $H$ share the same set $B$ but have pairwise disjoint sets $A(c)$. Since $B$ forms a clique of size $n=s-1<s$ in the $K_{s,t}$-minor-free graph $H(c)$ for every coloring $c:B \rightarrow [m+n-1]$, it follows by repeated application of Lemma~\ref{glue} that the graph $\mathbf{G}$ with vertex set $\bigcup_{c \in [m+n-1]^B}{A(c)} \cup B$, defined as the union of the graphs $H(c), c \in [m+n-1]^B$, is $K_{s,t}$-minor free as well.  

Now, consider an assignment $L:V(\mathbf{G}) \rightarrow 2^\mathbb{N}$ of color lists to the vertices of $\mathbf{G}$ as follows:
For every vertex $b \in B$, we define $L(b):=[m+n-1]$, and for every vertex $a \in A(c)$ for some coloring $c \in [m+n-1]^B$ of $B$, we define $L(a):= [m+n-1] \setminus \{c(b)|b \in B, ab \notin E(H(c))\}$.
Note that since every vertex in $A(c)$ has at most $\varepsilon' n$ non-neighbors in $H(c)$, we have $|L(v)| \ge m+n-1-\varepsilon' n$ for every vertex $v \in V(\mathbf{G})$. 

We now claim that $\mathbf{G}$ does not admit an $L$-coloring, which will then imply the inequality $\chi_\ell(\mathbf{G}) \ge m+n-\varepsilon' n$. Indeed, suppose towards a contradiction there exists a proper coloring $c_\mathbf{G}:V(\mathbf{G}) \rightarrow \mathbb{N}$ of $\mathbf{G}$ such that $c_\mathbf{G}(v) \in L(v)$ for every $v \in V(\mathbf{G})$. Let $c$ denote the restriction of $c_\mathbf{G}$ to $B$, and consider the proper coloring of $H(c)$ obtained by restricting $c_\mathbf{G}$ to the vertices in $H(c)$. Since $v(H(c))=m+n$ and $c_\mathbf{G}(v) \in [m+n-1]$ for every $v \in V(H(c))$, there must exist two (necessarily non-adjacent) vertices in $H(c)$ which have the same color with respect to $c_\mathbf{G}$. Concretely, there exist $a \in A(c)$, $b\in B$ such that $ab \notin E(H(c))$ and $c_\mathbf{G}(a)=c_\mathbf{G}(b)$. This however yields a contradiction, since $c_\mathbf{G}(a) \in L(a)$ and by definition $c(b)=c_\mathbf{G}(b)$ is not included in the list of $a$. 

We conclude that indeed, $\mathbf{G}$ is a $K_{s,t}$-minor-free graph which satisfies $$\chi_\ell(\mathbf{G}) \ge m+n-\varepsilon' n=\lfloor (1-\varepsilon)(s+t) \rfloor+\left(1-\frac{\varepsilon}{2}\right)(s-1)$$ $$>(1-\varepsilon)(s+t)-1+(1-\varepsilon)s+\frac{\varepsilon}{2}s-\left(1-\frac{\varepsilon}{2}\right)$$
$$=(1-\varepsilon)(2s+t)+\frac{\varepsilon}{2}s-2+\frac{\varepsilon}{2}>(1-\varepsilon)(2s+t),$$
where for the last inequality we used that $s \ge N \ge \frac{4}{\varepsilon}$. 
\end{proof}

\end{document}